\documentclass[a4paper,  leqno,  myheadings,  11pt,  twoside]{article}
\usepackage{mathrsfs}
\usepackage{hyperref}

\usepackage{amsmath,  amsthm,  amssymb,  amscd,  amsxtra, graphicx}
\usepackage{latexsym,  amsfonts}

\setlength{\textheight}{215 mm} \setlength{\textwidth}{145 mm}
\setlength{\topmargin}{4 mm}
 \setlength{\oddsidemargin}{9 mm}
\setlength{\evensidemargin}{9 mm} \pagestyle{myheadings}

\newcommand{\norm}[1]{\lVert#1\rVert}
\newcommand{\lnorm}[1]{{\lVert#1\rVert}_\infty}
\def\qc{quasi\-conformal }

\def\mc{\mathbb{C}}

\def\msu{\mathscr{U}}

\def\mcn{\mathcal{N}}

\def\T{Teich\-m\"ul\-ler }
\def\nde{non-decreasable }
\def\dcs{decreasable }

\def\bmu{\emu_Z}

\def\nmu{\|\mu\|_\infty}

\def\nmu{\|\mu\|_\infty}

\def\vp{\varphi}

\def\vp{\varphi}

\def\pa{\partial}

\def\ov{\overline}

\def\de{\Delta}
\def\q1s{Q^1(S)}

\def\ts{T(S)}

\def\zs{Z(S)}

\def\zde{Z(\de)}

\def\tde{T(\de)}

\def\emu{[\mu]}

 \makeatletter
\@addtoreset{equation}{section}

\newtheorem{theorem}{Theorem}

\newtheorem{lemma}{Lemma}[section]

\newtheorem{theo}{Theorem}

\newtheorem{problem}{Problem}

\newtheorem*{note}{Note}

\renewcommand{\thetheoa}

\markboth{\centerline{GUOWU YAO}}{\centerline{On infinitesimally weakly non-decreasable Beltrami differentials}}

\begin{document}

\title{\bf{On infinitesimally weakly non-decreasable Beltrami differentials}
\author{GUOWU YAO
\\
Department of Mathematical Sciences,
  Tsinghua University\\ Beijing,  100084,
   People's Republic of
  China \\E-mail: \texttt{wallgreat@mail.tsinghua.edu.cn}
}}
 \date{}
\maketitle
\begin{abstract}\noindent
Z. Zhou  et al. proved that in a \T equivalence class, there exists an extremal \qc mapping with a weakly \nde dilatation.    In this paper, we prove that in an infinitesimal  equivalence class, there exists a weakly  \nde extremal Beltrami differential.
\end{abstract}
\renewcommand{\thefootnote}{}

\footnote{Keywords: \T space,
\qc map, weakly non-decreasable,  non-decreasable.}

\footnote{2010 \textit{Mathematics Subject Classification.} Primary
30C75;  30C62.}
\footnote{The  work was  supported by   the National Natural Science Foundation of China (Grant
No. 11771233).}

\section{\!\!\!\!\!{. }
Introduction}\label{S:intr}

Suppose that ${S}$ is a Jordan domain  in the complex plane
$\mathbb{C}$,  and let $w=f(z)$ be   a \qc mapping on ${S}$. The Beltrami differential  of $f$ is defined by
\begin{equation*}
\mu(z)=\frac{f_{\overline z}(z)}{f_z(z)}\frac{d\overline z}{dz},
\end{equation*} which is also called the complex dilatation of
$f$.

Denote by $Bel(S)$ the Banach space of Beltrami differentials
$\mu=\mu(z)d\bar z/dz$ on $S$ with finite $L^{\infty}$-norm and by
$M(S)$ the open unit ball in $Bel(S)$.  Let $z_1,z_2,z_3$  be three boundary
points on $\pa{S}$. For a given $\mu\in M({S})$,
denote by $f^\mu$ the uniquely determined \qc mapping of
${S}$ onto itself with complex dilatation $\mu$ and
normalized to fix $z_1,z_2,z_3$. Two elements $\mu$ and $\nu$
in $M({S})$ are \T equivalent, which is denoted by
$\mu\sim\nu$, if
$f^\mu|_{\pa{S}}=f^\nu|_{\pa{S}}$. Then
$T({S})=M({S})/\sim $ is the  \T space of
${S}$. The equivalence class of the Beltrami differential
zero is the basepoint of $T({S})$.

For $\mu\in M(S)$, define
\begin{equation*}
k_0(\emu)=\inf\{\|\nu\|_\infty:\,\nu\in\emu\}.
\end{equation*}
We say that $\mu$ is extremal  in $\emu$
  if $\nmu=k_0(\emu)$ (the corresponding \qc map $f$ is said to be extremal for its boundary values as well), and uniquely extremal if $\|\nu\|_\infty>k_0(\mu)$ for any other
$\nu\in\emu$.

Let $Q(S)$ denote the  Banach space
 of integrable holomorphic quadratic differentials on $S$ with
$L^1-$norm
\begin{equation*}
\|\vp\|=\iint_{S}|\vp(z)|\, dxdy<\infty.
 \end{equation*}
 In what follows,  let $\q1s$
denote the unit sphere of $Q(S)$.

 Two Beltrami differentials $\mu$ and $\nu$ in $Bel(S)$ are said to
be infinitesimally equivalent if
\begin{equation*}\iint_S(\mu-\nu)\vp \, dxdy=0,  \text{
for any } \vp\in Q(S).
\end{equation*}
The tangent space $\zs$ of $\ts$ at the basepoint is defined as  the quotient space of $Bel(S)$ under the equivalence
relation.  $Q(S)$ is predual to $\zs$.  Denote by $\bmu$ the equivalence class of $\mu$ in
$\zs$.  In particular, we use $\mcn(S)$ to denote the set of Beltrami differentials in $Bel(S)$ that is equivalent to 0. The elements in $\mcn(S)$ are also called infinitesimally trivial.

$\zs$ is a Banach space and  its standard
norm satisfies
\begin{equation*}
\|\bmu\|:=\sup_{\vp\in \q1s}Re\iint_S \mu\vp \,
dxdy=\inf\{\|\nu\|_\infty:\,\nu\in\bmu\}.\end{equation*}
 We say that $\mu$ is extremal  (in $\bmu$) if $\nmu=\|\bmu\|$, and
uniquely extremal if $\|\nu\|_\infty>\nmu$ for any other $\nu\in
\bmu$.

 A Beltrami differential $\mu$ (not necessarily extremal) is called to be  \emph{\nde} in its class $\emu$ if  for $\nu\in \emu$,
  \begin{equation}
  |\nu(z)|\leq|\mu(z)|\  a.e. \text{\ in } S,\end{equation}
implies that $\mu=\nu$; otherwise, $\mu$  is called to be \emph{decreasable}.

 The notion of \nde dilatation was firstly introduced by Reich in \cite{Re4} when he studied the unique extremality of \qc mappings.
 A uniquely extremal Beltrami differential is obviously non-decreasable.

 Let $\de$ denote the unit disk $\{z\in\mc:\;|z|<1\}$ and $\tde$ be  the universal \T space.
 In \cite{SC}, Shen and Chen   proved  the following theorem.
 \begin{theo}\label{Th:sc}
For every  \qc mapping $f$ from $\de$ onto itself, there exist infinitely many \qc mappings $g$ in the \T equivalence class $[f]$ of $\tde$  each of which has  a  \nde dilatation unless $[f]$ contains a conformal mapping.
\end{theo}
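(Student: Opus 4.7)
My plan is two-stage: first, produce at least one \nde representative in $[f]$ by applying Zorn's lemma to a natural preorder on Beltrami differentials in the class; then use the non-conformality hypothesis to amplify this to infinitely many. For the first stage, fix $\mu_0 \in [f]$ and consider
\[P(\mu_0) = \{\nu \in [\mu_0] : |\nu(z)| \le |\mu_0(z)| \text{ a.e. in } \de\},\]
preordered by $\nu_1 \preceq \nu_2$ iff $|\nu_1| \le |\nu_2|$ a.e. Along a chain $\mathcal{C}$, extract a cofinal decreasing sequence of moduli $\{|\nu_n|\}$ converging a.e.\ to some $g \le |\mu_0|$ by monotonicity. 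I would construct a Beltrami differential $\nu^*$ with $|\nu^*| = g$ lying in $[\mu_0]$ by choosing phases compatibly along the sequence and exploiting compactness of \qc maps of uniformly bounded dilatation to extract a limiting \qc map whose boundary values agree with $f^{\mu_0}|_{\pa \de}$. Zorn then yields a modulus-minimal representative $\mu^*$, which is the candidate.

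The passage from ``modulus-minimal'' to \nde is not automatic, since two differentials of equal modulus but distinct phase are incomparable in the preorder yet both violate the strict \nde condition from the statement. Closing this gap requires a phase-rigidity step: if $\eta \in [\mu_0]$ satisfies $|\eta| = |\mu^*|$ a.e.\ with $\eta \neq \mu^*$, one must derive a contradiction via an extremality criterion on the common support of $\mu^*$, using the detailed structure of extremal \qc maps.

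For the infinitude, I exploit that $[f]$ contains no conformal map, so $k_0([f]) > 0$ and the class is genuinely rich. Choose pairwise disjoint positive-measure subsets $\{E_n\} \subset \de$, and for each $n$ produce a starting Beltrami differential $\mu_0^{(n)} \in [f]$ that differs from $\mu^*$ in a controlled way on $E_n$---for instance by pre-composing $f$ with a small \qc self-map of $\de$ supported near $E_n$, which redistributes dilatation while preserving boundary values. Running the Zorn procedure from each $\mu_0^{(n)}$ produces a \nde $\mu_n^*$ whose distinguishing behavior on $E_n$ should persist through the minimization, forcing pairwise distinctness.

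The main obstacle is twofold. First, verifying Zorn's hypothesis requires constructing a lower bound within the \T class $[\mu_0]$, which is neither convex nor weak-star closed in $Bel(\de)$; the limit must be built by a careful \qc approximation argument that preserves boundary values, likely via equicontinuity and Hamilton-type extremality. Second, and more delicate, is the phase-rigidity step, which cannot be handled by an averaging trick because $[\mu_0]$ is not convex, and which I expect to require the detailed extremality theory of \qc maps on $\de$.
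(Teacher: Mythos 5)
This statement is Theorem A of the paper, which is quoted from Shen--Chen \cite{SC} without proof, so there is no in-paper argument to compare against; I can only judge your proposal on its merits, and it has two gaps that I believe are fatal to the route as described. The first and most serious is the lower bound for chains. You need, for a chain with moduli decreasing a.e.\ to $g$, an element $\nu^*$ of the \emph{\T class} $[\mu_0]$ with $|\nu^*|\le g$ a.e. The two available limiting procedures each fail on one of the two requirements: the locally uniform limit $F$ of the normalized maps $f^{\nu_n}$ does stay in $[f]$, but locally uniform convergence of \qc maps gives essentially no pointwise control of the limit's complex dilatation --- $|\mu_F|$ can exceed $g$ on a set of positive measure (take $|\nu_n|=k\chi_{A_n}$ with $A_n$ decreasing, everywhere locally of positive measure, and $|\bigcap A_n|=0$: then $g=0$ a.e.\ while $\esssup_B|\nu_n|=k$ on every ball). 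Conversely, the weak-star limit of $\nu_n$ does satisfy $|\nu^*|\le g$ a.e., but weak-star limits preserve only the \emph{infinitesimal} class, not $[\mu_0]$. This mismatch is exactly the known obstruction here; it is why the present paper, working in the infinitesimal setting where weak-star limits are legitimate, still obtains only \emph{weakly} \nde representatives by an iterative exhaustion rather than a Zorn argument, and why Shen--Chen's actual proof of Theorem A proceeds quite differently (via auxiliary extremal problems producing dilatations of \T form, not via minimization over the class).

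The second gap you identify yourself but do not close: a $\preceq$-minimal element need not be \nde, since some $\eta\in[\mu_0]$ with $|\eta|=|\mu^*|$ a.e.\ but $\eta\ne\mu^*$ would witness decreasability of $\mu^*$ while being invisible to the modulus preorder. No ``phase-rigidity'' mechanism is supplied, and none is obvious --- nothing in the definition of the class forces the argument of a modulus-minimal differential to be determined by its modulus. Finally, the infinitude step is only a heuristic: pre-composition by a \qc self-map supported near $E_n$ changes the base point of the minimization, but you give no reason the resulting minimal elements are pairwise distinct, and ``the distinguishing behavior should persist through the minimization'' is precisely what would need proof. As it stands the proposal establishes neither existence nor infinitude.
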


 The author \cite{Yao2} proved that a \T class may contain infinitely many \nde extremal dilatations. The existence of a \nde extremal in a class is generally unknown.

In \cite{ZZC}, Zhou et al. defined \emph{weakly \nde dilatation} as follows. Let $\mu\in M(S)$. $\mu$  is called a \emph{strongly \dcs dilatation} in $\emu$ if there exists $\nu\in \emu$ satisfying the following conditions:\\
(A) $|\nu(z)|\leq |\mu(z)|$ for almost all $z\in S$,\\
(B) There exists a domain $G\subset S$  and a positive number $\delta>0$ such that
\begin{equation*}
|\nu(z)|\leq |\mu(z)|-\delta, \text{ for almost all  } z\in G.
\end{equation*}
Otherwise, $\mu$ is called \emph{weakly non-decreasable}. In other words, a Beltrami differential $\mu$ is called weakly \nde if either $\mu$ is \nde or $\mu$ is decreasable but is not strongly decreasable.

Let $\mu\in Bel(S)$. The definition that $\mu$ is a strongly decreasable dilatation in $\bmu$ almost takes word-by-word from that  $\mu$ is a strongly decreasable dilatation in $\emu$ except that the equivalence class $\emu$ is replaced by $\bmu$, so is the definition that $\mu$ is a weak non-decreasable dilatation in $\bmu$.

In \cite{ZZC}, Zhou et. al proved the following theorem.
\begin{theo}\label{Th:zzc}
For every extremal \qc mapping $f$ from $\de$ onto itself, there exists an extremal \qc mapping $g$ in the \T equivalence class $[f]$  with a weakly \nde dilatation.
\end{theo}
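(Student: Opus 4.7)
The plan is to apply Zorn's lemma to select, from within the \T class of $f$'s Beltrami differential $\mu_0$, an extremal whose modulus is pointwise as small as possible. Let $k_0 := k_0([\mu_0]) = \|\mu_0\|_\infty$ and consider
\begin{equation*}
\mathcal{A} := \{\nu \in [\mu_0] : \|\nu\|_\infty = k_0,\ |\nu(z)| \leq |\mu_0(z)| \text{ a.e.\ on } \de\},
\end{equation*}
the family of extremals in $[f]$ pointwise dominated by $|\mu_0|$, partially ordered by $\nu_1 \preceq \nu_2 \iff |\nu_1(z)| \leq |\nu_2(z)|$ a.e. A $\preceq$-minimal element of $\mathcal{A}$, once its existence is secured, will be the sought weakly non-decreasable extremal.

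To apply Zorn, fix a chain $\mathcal{C} \subset \mathcal{A}$. The standard argument that the essential infimum of a totally $\preceq$-ordered family is realized along a countable subfamily (compare $\iint_\de |\nu|\,dxdy$, which is finite since $|\nu| \leq 1$) yields a cofinal sequence $\{\nu_n\} \subset \mathcal{C}$ with $|\nu_{n+1}| \leq |\nu_n|$ a.e.\ and $\rho(z) := \lim_n |\nu_n(z)| = \essinf_{\nu \in \mathcal{C}} |\nu(z)|$ a.e. Let $f_n := f^{\nu_n}$. Post-composing by a Möbius self-map of $\de$ (which leaves the dilatation unchanged), we may arrange that each $f_n$ agrees with $f$ on $\pa \de$. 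The $f_n$ are then uniformly $K$-qc with $K=(1+k_0)/(1-k_0)$ and extend homeomorphically to $\ov\de$ with common boundary values, hence form a normal family; a subsequence $f_n \to F$ uniformly on $\ov\de$ with $F|_{\pa \de} = f|_{\pa \de}$. Consequently $F \sim f$ and its Beltrami differential $\mu_*$ lies in $[\mu_0]$.

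The crux is to prove $|\mu_*(z)| \leq \rho(z)$ a.e., which will place $\mu_*$ in $\mathcal{A}$ as a lower bound of $\mathcal{C}$. This is the principal obstacle, since uniform convergence of the $f_n$ does not by itself constrain the Beltrami coefficient of the limit. The plan is to combine the monotone decrease $|\nu_n| \downarrow \rho$ with Bojarski's stability theorem for the Beltrami equation: on $\{\rho = 0\}$ one has $\nu_n \to 0$ a.e.\ regardless of phases, while on $\{\rho > 0\}$ a diagonal phase-selection argument (controlling $\arg \nu_n$ on a countable dense set and then passing to further subsequences) produces $\nu_n \to \nu_\infty$ a.e.\ with $|\nu_\infty| = \rho$. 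Bojarski's theorem then gives $f^{\nu_n} \to f^{\nu_\infty}$ locally uniformly, and uniqueness of limits forces $\mu_* = \nu_\infty$, whence $|\mu_*| = \rho$. The phase-control substep is the main technical hurdle; should it prove too delicate, an alternative is to apply the Reich--Strebel main inequality to $\mu_*$ against $\vp \in \qde$, using Fatou on $\iint_\de (|\nu_n|-\rho)|\vp|\,dxdy \to 0$ to extract the pointwise bound indirectly via a Hamilton-sequence characterization of extremality.

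Once Zorn supplies a $\preceq$-minimal $\mu_* \in \mathcal{A}$, the conclusion is immediate. If $\mu_*$ were strongly decreasable, there would be $\nu \in [\mu_*] = [\mu_0]$ with $|\nu| \leq |\mu_*|$ a.e.\ and $|\nu| \leq |\mu_*| - \delta$ on some subdomain $G \subset \de$ for a $\delta > 0$. Then $\|\nu\|_\infty \leq \|\mu_*\|_\infty = k_0$, and the reverse inequality holds because $\nu \in [\mu_0]$; so $\nu \in \mathcal{A}$, yet $\nu \prec \mu_*$ strictly, contradicting minimality. Hence $\mu_*$ is weakly non-decreasable and $g := f^{\mu_*}$ is the required extremal qc mapping in $[f]$.
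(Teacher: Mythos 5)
Your overall strategy---Zorn's lemma on the family $\mathcal{A}$ of extremals pointwise dominated by $|\mu_0|$---is not the route taken in the literature (the paper quotes this theorem from \cite{ZZC} and itself proves only the infinitesimal analogue, by a greedy iterative construction rather than by Zorn), and it has a genuine gap at exactly the point you flag as the crux: producing a lower bound for a chain. Note first what completing your argument would yield: a $\preceq$-minimal element of $\mathcal{A}$ is an extremal $\mu_*$ such that no $\nu\in[\mu_0]$ satisfies $|\nu|\le|\mu_*|$ a.e.\ with strict inequality on a set of positive measure. That is, up to replacing the conclusion $\nu=\mu_*$ by $|\nu|=|\mu_*|$ a.e., a \emph{non-decreasable} extremal---whose existence the paper explicitly describes as ``generally unknown''. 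The whole reason the weakly non-decreasable notion was introduced, and why both \cite{ZZC} and the present paper resort to an iterative construction forcing the dilatation to vanish only on a countable union of round \emph{disks}, is that pointwise domination does not survive passage to limits of quasiconformal maps: on an open disk where all the $\nu_n$ vanish, the locally uniform limit is conformal by Weierstrass, but on a general measurable set one only retains the global bound $\|\mu_*\|_\infty\le\liminf\|\nu_n\|_\infty$, not the pointwise bound $|\mu_*(z)|\le\lim|\nu_n(z)|$ a.e.\ that membership of $\mu_*$ in $\mathcal{A}$ requires.

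Concretely, neither of your proposed repairs works. The ``phase-selection'' step is not available: the $\nu_n$ are merely measurable, so controlling $\arg\nu_n$ on a countable dense set says nothing about their a.e.\ behaviour, and a bounded sequence in $L^\infty(\de)$ with $|\nu_n|\downarrow\rho$ need not admit any a.e.\ convergent subsequence (let $\arg\nu_n$ oscillate like independent Rademacher functions); hence Bojarski's convergence theorem cannot be invoked. The fallback via the Reich--Strebel inequality and Fatou produces only inequalities integrated against $|\vp|$ for $\vp\in\qde$, i.e.\ norm-type information, never the pointwise inequality $|\mu_*|\le\rho$ a.e. By contrast, the paper's proof of the infinitesimal analogue circumvents this obstacle because infinitesimal equivalence is a \emph{linear} condition: the $*$-weak limit $\nu$ of the $\chi_n$ automatically stays in the class and satisfies $|\nu|\le\varlimsup_n|\chi_n|$ a.e., and the final contradiction is arranged through the quantitative choices $\rho_n(\chi_n)\ge\rho_n/2$ and $\rho_n\to0$ rather than through any minimality principle. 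Your closing paragraph (minimality implies weak non-decreasability) is correct as far as it goes, but the existence of the minimal element is precisely the open difficulty, so the proposal does not establish the theorem.
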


The goal of the paper is to prove the counterpart of Theorem \ref{Th:zzc} in the infinitesimal setting.

\begin{theorem}\label{Th:infweaknde}Suppose  $\bmu\in \zde$.  Then
 there is a weakly \nde extremal dilatation $\nu$ in $\bmu$.
\end{theorem}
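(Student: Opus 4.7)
My plan is to realize $\nu$ as a minimizer of the $L^2$-type functional
\begin{equation*}
I(\eta):=\iint_\de|\eta(z)|^2\,dxdy
\end{equation*}
restricted to the set $E:=\{\eta\in\bmu:\|\eta\|_\infty=k_0\}$ of extremal representatives of $\bmu$, where $k_0:=\|\bmu\|$. The intuition is that a uniform decrease of $|\nu|$ on some subdomain (i.e., the failure of weak non-decreasability) forces a strict drop in $I$, so any minimizer of $I$ on $E$ is automatically weakly \nde in $\bmu$.

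First I would check that $E$ is nonempty and that $I$ attains its infimum on $E$. For any sequence $\nu_n\in\bmu$ with $\|\nu_n\|_\infty\to k_0$, the Banach--Alaoglu theorem supplies a weak-$*$ subsequential limit $\eta\in L^\infty(\de)$ with $\|\eta\|_\infty\le k_0$. Because every $\vp\in Q(\de)$ lies in $L^1(\de)$, the infinitesimal equivalence relation passes to the weak-$*$ limit,
\begin{equation*}
\iint_\de(\eta-\mu)\vp\,dxdy=\lim_{n\to\infty}\iint_\de(\nu_n-\mu)\vp\,dxdy=0,
\end{equation*}
so $\eta\in\bmu\cap E$. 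Applied to a minimizing sequence for $I_0:=\inf_E I$, the same argument furnishes a weak-$*$ limit $\nu\in E$; since $\de$ is bounded we have $L^2(\de)\subset L^1(\de)$, so weak-$*$ convergence in $L^\infty(\de)$ yields weak convergence in $L^2(\de)$, and lower semicontinuity of the $L^2$-norm then gives $I(\nu)\le\liminf_n I(\nu_n)=I_0$, i.e.\ $I(\nu)=I_0$.

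Next I would verify that such a $\nu$ is weakly \nde in $\bmu$. Suppose otherwise: there exist $\nu'\in\bmu$, a subdomain $G\subset\de$ and $\delta>0$ with $|\nu'|\le|\nu|$ a.e.\ on $\de$ and $|\nu'|\le|\nu|-\delta$ a.e.\ on $G$. The first inequality gives $\|\nu'\|_\infty\le k_0$, so $\nu'\in E$. The second inequality forces $|\nu|\ge\delta$ a.e.\ on $G$, whence $(|\nu|-\delta)^2<|\nu|^2$ a.e.\ on $G$, and, since $G$ has positive measure, integration yields
\begin{equation*}
I(\nu')\le\iint_{\de\setminus G}|\nu|^2\,dxdy+\iint_G(|\nu|-\delta)^2\,dxdy<I(\nu)=I_0,
\end{equation*}
contradicting the minimality of $\nu$.

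The only genuinely delicate point is the compactness step: closedness of $E$ under weak-$*$ limits in $L^\infty(\de)$ and weak-$*$ lower semicontinuity of $I$. Both rest on the facts that $Q(\de)\subset L^1(\de)$ and that $\de$ has finite area (so $L^\infty(\de)\hookrightarrow L^2(\de)$); once these are established, the pointwise quadratic comparison driving the verification step is routine, and I do not expect any finer extremality machinery (Hamilton sequences, the infinitesimal Reich--Strebel inequality, etc.) to be needed.
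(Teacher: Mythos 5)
Your proposal is correct, and it takes a genuinely different and noticeably more elementary route than the paper. The paper follows the Zhou--Zhang--Chen scheme: it repeatedly applies a local surgery lemma (built on Reich's extension theorem) to replace a strongly decreasable representative by one vanishing on a disk, runs an exhaustion argument in which the admissible radii $\rho_n$ are forced to tend to $0$, passes to a weak-$*$ limit, and derives a contradiction from the existence of a competitor vanishing on a disk of fixed radius. Your variational argument bypasses all of that: the set $E=\bmu\cap\{\|\eta\|_\infty\le k_0\}$ is convex and weak-$*$ compact (each constraint $\iint(\eta-\mu)\vp\,dxdy=0$, $\vp\in Q(\de)\subset L^1(\de)$, is weak-$*$ continuous), the functional $I$ is weak-$*$ lower semicontinuous because $L^\infty(\de)\hookrightarrow L^2(\de)$ on the bounded domain $\de$, and the pointwise estimate $(|\nu|-\delta)^2\le|\nu|^2-\delta^2$ on $G$ (valid since condition (B) forces $|\nu|\ge\delta$ there) converts a strong decrease into a strict drop of $I$. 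Each step checks out, including the observation that any competitor $\nu'$ with $|\nu'|\le|\nu|$ automatically lies in $E$. In fact your minimizer satisfies something stronger than weak non-decreasability: any $\eta\in\bmu$ with $|\eta|\le|\nu|$ a.e.\ must have $|\eta|=|\nu|$ a.e.\ (any decrease of the modulus on a set of positive measure already lowers $I$), and by strict convexity of $I$ on the convex set $E$ the minimizer is unique; note this still falls short of full non-decreasability, which would require $\eta=\nu$ rather than $|\eta|=|\nu|$. What the paper's construction buys in exchange for its length is the refined Theorem 2: the weakly \nde representative can be located inside $\msu=\{\alpha\in\bmu:|\alpha|\le|\chi|\}$ for an arbitrarily prescribed $\chi\in\bmu$, and it comes with a representative vanishing on an explicit union of disks; your method adapts to that setting too (minimize $I$ over $\msu$, which is again convex and weak-$*$ compact), but as written your proof addresses only Theorem 1.
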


After doing some preparations in Section \ref{S:prepar}, we prove Theorem \ref{Th:infweaknde} in Section \ref{S:proof}.

\section{\!\!\!\!\!{. }
Some preparations }\label{S:prepar}

\begin{lemma}\label{Th:reich}
Let  $\nu\in Bel(\de)$. Then for any given $\epsilon>0$, there exists some $r\in (0,1)$  and  $\mu\in Bel(\de)$ such that \\
(1) $\mu\in \mcn(\de),$ (2) $\mu(z)=\nu(z)$, $z\in \de_r$, (3) $\norm{\mu|_{U_r}}_\infty<\epsilon$,\\
where $\de_r=\{z\in \de:|z|<r\}$, $r\in (0,1)$ and $U_r=\de\backslash \de_r$.
\end{lemma}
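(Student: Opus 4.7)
The plan is to fix $r$ appropriately, take $\mu := \nu$ on $\de_r$, and determine the values of $\mu$ on $U_r$ via a duality argument so that $\mu$ pairs to zero against every $\vp \in Q(\de)$, while keeping $\|\mu|_{U_r}\|_\infty < \epsilon$. The guiding observation is that, when $r$ is small, each unit-norm $\vp \in Q(\de)$ carries only a tiny fraction of its $L^1$ mass inside $\de_r$, so the ``defect'' functional $\vp \mapsto \iint_{\de_r}\nu\vp\,dxdy$ is small relative to the $L^1(U_r)$ norm of $\vp$, and the Hahn--Banach theorem will then produce a small $\beta \in L^\infty(U_r)$ that cancels it.

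First I would establish the uniform estimate $\iint_{\de_r}|\vp|\,dxdy \leq \frac{r^2}{(1-r)^2}\|\vp\|$, valid for every $\vp \in Q(\de)$. Subharmonicity of $|\vp|$ on the disk $D(z, 1-|z|) \subset \de$ gives the pointwise bound $|\vp(z)| \leq \|\vp\|/(\pi(1-|z|)^2)$, and integrating over $\de_r$ yields the stated inequality. From it, $\|\vp|_{U_r}\|_{L^1} \geq \bigl(1 - r^2/(1-r)^2\bigr)\|\vp\|$, and hence
\begin{equation*}
\Bigl|\iint_{\de_r}\nu\vp\,dxdy\Bigr| \leq \|\nu\|_\infty\cdot\frac{r^2/(1-r)^2}{1-r^2/(1-r)^2}\,\|\vp|_{U_r}\|_{L^1}.
\end{equation*}
The coefficient tends to $0$ as $r \to 0^+$, so I fix $r$ small enough that it is strictly less than $\epsilon$.

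Because any holomorphic $\vp$ that vanishes on the open set $U_r$ vanishes identically on $\de$, the map $\vp \mapsto \vp|_{U_r}$ embeds $Q(\de)$ as a linear subspace $W \subset L^1(U_r)$, and the formula $\Lambda(\vp|_{U_r}) := -\iint_{\de_r}\nu\vp\,dxdy$ defines a well-defined complex-linear functional on $W$ of operator norm less than $\epsilon$. I would extend $\Lambda$ to $L^1(U_r)$ with the same norm via the Hahn--Banach theorem, then use the duality $L^1(U_r)^* = L^\infty(U_r)$ to represent the extension as integration against some $\beta \in L^\infty(U_r)$ with $\|\beta\|_\infty < \epsilon$. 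Setting $\mu := \nu\chi_{\de_r} + \beta\chi_{U_r}$, the identity $\iint_{U_r}\beta\vp\,dxdy = -\iint_{\de_r}\nu\vp\,dxdy$ forces $\iint_\de\mu\vp\,dxdy = 0$ for every $\vp \in Q(\de)$, which places $\mu \in \mcn(\de)$; conditions (2) and (3) hold by construction.

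The principal obstacle I expect is the \emph{uniformity} over $\{\vp \in Q(\de) : \|\vp\| = 1\}$ of the convergence $\iint_{\de_r}|\vp| \to 0$ as $r \to 0^+$. A naive dominated-convergence argument gives only pointwise convergence in $\vp$, which is insufficient because the $L^1$ mass of a general holomorphic $\vp$ can concentrate near an arbitrary interior point; the subharmonic mean-value bound supplies precisely the uniformity that is needed, after which the rest of the proof is a formal application of Hahn--Banach and $L^1$-$L^\infty$ duality.
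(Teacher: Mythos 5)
Your proof is correct, but it takes a genuinely different route from the paper's. The paper invokes Reich's Theorem 1.1 from \cite{Re2}, which produces the unique \emph{holomorphic} function $\beta$ on $\mc\setminus\ov{\de_r}$ --- given explicitly as a Cauchy transform of $\nu\chi_{\de_r}$, namely $\beta(z)=-\tfrac{z}{\pi(1-r^2)}\iint_{\de_r}\tfrac{\nu(\zeta)}{\zeta-z}\,d\xi d\eta$ --- such that $\nu\chi_{\de_r}+\beta\chi_{U_r}\in\mcn(\de)$, and then bounds $|\beta(z)|$ directly by splitting the integral near and away from the singularity, obtaining $\norm{\beta|_{U_r}}_\infty=O(r)$. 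You instead treat the problem as a norm-preserving extension of the defect functional $\vp\mapsto-\iint_{\de_r}\nu\vp\,dxdy$ from the (injectively embedded, not necessarily closed --- which Hahn--Banach does not require) subspace $\{\vp|_{U_r}:\vp\in Q(\de)\}\subset L^1(U_r)$, the quantitative input being the mean-value estimate $\iint_{\de_r}|\vp|\,dxdy\le \tfrac{r^2}{(1-r)^2}\norm{\vp}$, which is uniform over $Q(\de)$ exactly as you say it must be; Hahn--Banach plus $L^1$--$L^\infty$ duality then yields a generally non-holomorphic, non-explicit $\beta$ with $\norm{\beta}_\infty<\epsilon$. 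Both arguments are complete. The paper's route buys an explicit and canonical $\beta$ (holomorphic and unique, per Reich); yours buys independence from Reich's theorem and portability, since it works verbatim with $\de_r$ replaced by any set whose complement has nonempty interior and on which unit-norm quadratic differentials carry uniformly small $L^1$ mass. Your closing remark correctly identifies the crux: pointwise-in-$\vp$ convergence of $\iint_{\de_r}|\vp|$ to $0$ would not suffice, and the subharmonicity bound is what supplies the uniformity.
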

\begin{proof}
By Theorem 1.1 in \cite{Re2}, there exists a unique function $\beta(z)$, holomorphic in $\mc\backslash \ov{\de_r}$, such that
\begin{equation*}
\mu(z)=\begin{cases}\nu(z),  \quad z\in \de_r,\\
\beta(z),\quad z\in U_r,
\end{cases}
\end{equation*}
belongs to $\mathcal{N}(\de)$; namely,
\begin{equation*}
\beta(z)=-\frac{z}{\pi (1-r^2)}\iint_{\de_r}\frac{\nu(\zeta)}{\zeta-z}d\xi d\eta,\; z\in \mc\backslash \ov{\de_r}.
\end{equation*}
To complete the proof of this lemma, it is sufficient to show that $\norm{\beta|_{U_r}}_\infty<\epsilon$ for small $r>0$.
We need to evaluate $|\beta(z)|$ for $z\in U_r$. Let $z'$ be the intersection point of the segment $\ov{oz}$ with the circle $\{|\zeta|=r\}$ and $B_r=\{|\zeta-z'|<\frac{r}{2}\}$. Then  for $z\in U_r$,
\begin{align*}
&|\beta(z)|=\frac{|z|}{\pi (1-r^2)}\left|\iint_{\de_r}\frac{\nu(\zeta)}{\zeta-z}d\xi d\eta\right|\leq\frac{\norm{\nu}_\infty}{\pi (1-r^2)}\iint_{\de_r}\frac{1}{|\zeta-z|}d\xi d\eta\\
&\leq M\iint_{\de_r}\frac{1}{|\zeta-z'|}d\xi d\eta\leq M\left(\iint_{B_r}\frac{1}{|\zeta-z'|}d\xi d\eta+\iint_{\de_r\backslash B_r}\frac{1}{|\zeta-z'|}d\xi d\eta\right)\\
&\leq M\left(\pi r+\frac{2}{r}\iint_{\de_r}d\xi d\eta\right)=3M\pi r=\frac{3\pi\norm{\nu}_\infty r}{\pi (1-r^2)},
\end{align*}
where $M=\frac{\norm{\nu}_\infty}{\pi (1-r^2)}$. This lemma follows readily.

\end{proof}

Throughout the paper, we denote by $\de(\zeta,r)$ the round disk $\{z:\;|z-\zeta|<r\}$ ($r>0$).

\begin{lemma}\label{Th:cor}Suppose that $\mu$ and  $\alpha$ belong to $ Bel(\de)$. Then for any $\zeta\in \de$ and $\epsilon>0$,  there exists  $\nu\in \bmu$ and a small $r>0$ such that $\lnorm{\nu|_{\de\backslash\de(\zeta,r)}}\leq \lnorm{\mu}+\epsilon$ and
\begin{equation}\label{Eq:mug0} \nu(z)=\alpha(z),\quad \text{ when } z\in \de(\zeta,r)=\{z:\;|z-\zeta|<r\}.\end{equation}
 In particular, $\nu$ vanishes on $\de(\zeta,r)$ when $\alpha=0$.
\end{lemma}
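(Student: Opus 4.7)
The plan is to reduce to Lemma \ref{Th:reich} via a M\"obius change of coordinates. The geometric input is that every Euclidean disk $\de(\zeta,r)$ compactly contained in $\de$ equals $S(\de_{r_0})$ for some M\"obius automorphism $S$ of $\de$ and some $r_0\in(0,1)$; indeed $\de(\zeta,r)$ is also a hyperbolic ball in $\de$ with some hyperbolic center $c\in\de$ and hyperbolic radius $\rho$, so the M\"obius $S$ of $\de$ with $S(0)=c$ sends $\de_{\tanh\rho}$ onto $\de(\zeta,r)$. As $r\to 0^+$, the corresponding $r_0=\tanh\rho$ tends to $0^+$.

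Fix a small $r>0$ (to be chosen), with the corresponding $S$ and $r_0$, and set $\tilde\lambda:=S^*(\alpha-\mu)$, which is a Beltrami differential on $\de$ with $\|\tilde\lambda\|_\infty=\|\alpha-\mu\|_\infty$. Applying the Reich construction from the proof of Lemma \ref{Th:reich} at inner radius $r_0$ produces $\tilde\eta\in\mathcal{N}(\de)$ equal to $\tilde\lambda$ on $\de_{r_0}$ and with
\[
\|\tilde\eta|_{U_{r_0}}\|_\infty\leq\frac{3\|\alpha-\mu\|_\infty\,r_0}{1-r_0^2},
\]
which falls below $\epsilon$ once $r$ is small enough.

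Now define $\eta:=(S^{-1})^*\tilde\eta$. Since M\"obius pullback is a norm-preserving bijection of $Q(\de)$ onto itself and preserves the pairing $\iint\mu\,\vp\,dxdy$, membership in $\mathcal{N}(\de)$ is preserved and $\eta\in\mathcal{N}(\de)$. A direct change-of-variables computation on $\de(\zeta,r)=S(\de_{r_0})$ shows that the phase factors $\overline{S'}/S'$ arising from pullback and subsequent inverse pullback cancel, yielding $\eta=\alpha-\mu$ there; while for $z\in\de\setminus\de(\zeta,r)$ one has $|\eta(z)|=|\tilde\eta(S^{-1}(z))|<\epsilon$.

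Finally set $\nu:=\mu+\eta$. Then $\nu-\mu=\eta\in\mathcal{N}(\de)$, so $\nu\in\bmu$; on $\de(\zeta,r)$, $\nu=\alpha$; and on $\de\setminus\de(\zeta,r)$, $|\nu|\leq\|\mu\|_\infty+\epsilon$, as required. The main technical point is verifying the M\"obius realization $\de(\zeta,r)=S(\de_{r_0})$, which is a short hyperbolic-geometry calculation showing that every Euclidean sub-disk of $\de$ is also a hyperbolic ball with continuously varying parameters and that the hyperbolic radius $\to 0$ as the Euclidean radius $\to 0$; all other steps are routine applications of Lemma \ref{Th:reich} together with the conformal invariance of $\mathcal{N}(\de)$.
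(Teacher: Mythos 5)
Your proof is correct, and it reaches Lemma \ref{Th:reich} by a genuinely different reduction than the paper's. The paper takes a small Euclidean disk $D=\{|z-\zeta|<\rho\}\subset\de$ \emph{concentric} with the target disk, applies Lemma \ref{Th:reich} to $D$ (via the obvious affine rescaling) to obtain $\chi\in\mcn(D)$ with $\chi=\alpha-\mu$ on $\de(\zeta,r)$ and $\|\chi\|_\infty<\epsilon$ on $D\setminus\de(\zeta,r)$, and then extends $\chi$ by zero to all of $\de$; the key functional-analytic point there is that the zero-extension of $\mcn(D)$ lies in $\mcn(\de)$, because restriction maps $Q(\de)$ into $Q(D)$. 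You instead conjugate by a M\"obius automorphism of the whole disk, which requires the hyperbolic-geometry observation that $\de(\zeta,r)$ is a hyperbolic ball (with hyperbolic center different from $\zeta$ in general) together with the M\"obius invariance of $\mcn(\de)$. Both routes are sound; there is no circularity in your choice of $r$, since $r_0=r_0(r)\to 0$ as $r\to 0$ with $\zeta$ fixed, so the Reich error bound $3\|\alpha-\mu\|_\infty r_0/(1-r_0^2)$ can be driven below $\epsilon$. The paper's version is slightly more economical (no hyperbolic geometry is needed, and its $\nu$ agrees with $\mu$ identically outside the small disk $D$ rather than only up to an $\epsilon$-perturbation spread over all of $\de\setminus\de(\zeta,r)$), while yours avoids introducing $\mcn(D)$ for subdomains and the zero-extension step. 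If you write yours up in full, the two facts to make explicit are that $S^*$ is a norm-preserving bijection of $Q(\de)$ and that $\iint_\de (S^*\lambda)(S^*\vp)\,dxdy=\iint_\de \lambda\vp\,dxdy$, which together justify the invariance of $\mcn(\de)$ that your argument invokes.
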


\begin{proof}
Choose sufficiently small $\rho>0$ such that the disk $D=\Delta(\zeta,\rho)$ is contained in $\de$. Restrict $\mu$ on $D$.
Applying  Lemma \ref{Th:reich} to $D$, we can find  some small $r\in (0,\rho)$ and $\chi\in\mcn(D)$ such that $\chi=\alpha-\mu$ on $\de(\zeta,r)$ and $\norm{\chi}_\infty<\epsilon$ on $D\backslash \de(\zeta,r)$. Put
\begin{equation*}
\nu(z)=\begin{cases}\chi(z)+\mu(z),  \quad &z\in D,\\
\mu(z), & z\in \de\backslash D.
\end{cases}
\end{equation*}
Then $\nu\in\bmu$ and $\nu=\alpha$ in $\de(\zeta,r)$.  It is clear that  $\lnorm{\nu|_{\de\backslash\de(\zeta,r)}}\leq \lnorm{\mu}+\epsilon$.
\end{proof}

\section{\!\!\!\!\!{. }
Proof of  the main result}\label{S:proof}

Theorem \ref{Th:infweaknde} follows from the following generalized theorem immediately.

\begin{theorem}\label{Th:lempseudo}Suppose  $\bmu\in \zde$.  Let $\chi\in \bmu$ and  $\msu=\{\alpha\in \bmu:\; |\alpha(z)|\leq |\chi(z)| \text{ a.e. on } \de\}$. Then
then there is a weakly \nde dilatation $\nu$ in $\msu$.
\end{theorem}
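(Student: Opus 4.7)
The plan is to exhibit a weakly non-decreasable element of $\msu$ as a minimizer of the $L^1$-functional
$$F(\nu) = \iint_\de |\nu(z)|\, dxdy$$
over $\msu$, via a weak-$*$ compactness argument in $L^\infty(\de)=(L^1(\de))^*$. The guiding idea is that strong decreasability on any domain $G$ would produce a strict decrease of at least $\delta\,|G|$ in $F$, contradicting minimality.

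First, I would show that $\msu$ is weak-$*$ compact in $L^\infty(\de)$. It is bounded by $\lnorm{\chi}$ and convex: both the affine constraint $\iint_\de(\alpha-\mu)\vp\,dxdy=0$ for $\vp\in Q(\de)$ and the pointwise bound $|\alpha|\le|\chi|$ are preserved under convex combinations. Weak-$*$ closedness of the affine constraint is immediate since $Q(\de)\subset L^1(\de)$. For the modulus constraint, if $\alpha_n \in \msu$ and $\alpha_n\rightharpoonup^* \alpha$, then for every measurable $A\subset\de$ and every $\tau \in \mr$,
$$\text{Re}\!\left(e^{i\tau}\iint_A \alpha\,dxdy\right)=\lim_n \text{Re}\!\left(e^{i\tau}\iint_A \alpha_n\,dxdy\right)\le \iint_A |\chi|\,dxdy.$$
Picking $\tau=-\arg\alpha(z_0)$ at a Lebesgue point $z_0$ of $\alpha$ and shrinking $A=\de(z_0,\rho)$ then yields $|\alpha(z_0)|\le|\chi(z_0)|$ a.e.

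Second, $F$ is weak-$*$ lower semicontinuous because it can be written as
$$F(\nu)=\sup\Bigl\{\Bigl|\iint_\de \nu\phi\,dxdy\Bigr|:\phi \in L^\infty(\de),\ \lnorm{\phi}\le 1\Bigr\},$$
a supremum of weak-$*$ continuous linear functionals (every such $\phi$ lies in $L^1(\de)$ because $\de$ has finite measure). Hence $F$ attains its infimum on the weak-$*$ compact set $\msu$ at some $\nu^*\in\msu$.

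Finally, to see that $\nu^*$ is weakly non-decreasable, suppose instead that it is strongly decreasable: there exist $\alpha\in\bmu$, a domain $G\subset\de$, and $\delta>0$ with $|\alpha|\le|\nu^*|$ a.e.\ and $|\alpha|\le|\nu^*|-\delta$ a.e.\ on $G$. Then $|\alpha|\le|\chi|$ a.e., so $\alpha\in\msu$, but
$$F(\alpha)\le F(\nu^*)-\delta\,|G|<F(\nu^*),$$
contradicting minimality. The delicate step in this plan is the weak-$*$ closedness of the pointwise constraint $|\alpha|\le|\chi|$; the rest is a formal compactness argument and does not explicitly call on Lemmas \ref{Th:reich}--\ref{Th:cor}, so the author's intended proof may instead be a more constructive iteration on a countable base of disks using those splitting lemmas.
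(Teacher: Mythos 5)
Your argument is correct, and it takes a genuinely different and considerably shorter route than the paper's. The paper never minimizes a functional: it runs a greedy iteration, using Reich's extension theorem (via Lemmas \ref{Th:reich} and \ref{Th:cor}) to replace a strongly decreasable $\chi_n$ by an element of $\msu$ that vanishes identically on a round disk of nearly maximal admissible radius, accumulates a disjoint family of such disks (whose radii must tend to $0$ since they are disjoint in $\de$), passes to a weak-$*$ limit $\nu$ vanishing on the union $B$ of those disks, and derives a contradiction because a further strong decrease of $\nu$ would produce an element of every $\Lambda_n$ vanishing on a disk of some fixed radius in $\de\backslash B$. Your variational scheme replaces all of this with one application of Banach--Alaoglu: $\msu$ is weak-$*$ compact (your Lebesgue-point argument for the closedness of the constraint $|\alpha|\le|\chi|$ is sound, and could be shortened by noting that this set is convex and $L^2$-closed, hence $L^2$-weakly closed, which on bounded sets implies weak-$*$ closed), the $L^1$-norm is weak-$*$ lower semicontinuous as a supremum of weak-$*$ continuous functionals, and a strong decrease on a domain $G$ forces a drop of at least $\delta\,|G|>0$ in $F$ while keeping the competitor inside $\msu$ (since $|\alpha|\le|\nu^*|\le|\chi|$), contradicting minimality. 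The only point worth making explicit is that a domain has positive Lebesgue measure, which is exactly where condition (B) of strong decreasability enters. What your approach buys is economy: Lemmas \ref{Th:reich} and \ref{Th:cor}, and with them Reich's theorem, are not needed at all. What the paper's construction buys is extra structural information --- its weakly non-decreasable representative vanishes on an explicit countable union of disks --- which is not required by the statement; your minimizer instead comes with the (equally unneeded but pleasant) characterization as an $L^1$-minimal element of $\msu$.
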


\begin{proof} The technique of the proof is partly taken from \cite{ZZC}. We use it in a refined way.

   If $\chi$ is a weakly \nde dilatation, then  $\nu=\chi$ is the desired  weakly \nde dilatation. Otherwise, $\chi$ is strongly decreasable. By definition, there exists a Beltrami differential $\eta\in\msu$  such that \\
(1) $|\eta(z)|\leq |\chi(z)|$ for almost all $z\in \de$,\\
(2) there exists a small round disk $\de(z'_0,s'_0)\subset \de$  and a positive number $\delta>0$ such that
\begin{equation*}
|\eta(z)|\leq |\chi(z)|-\delta, \text{ for almost all  } z\in \de(z'_0,s'_0).
\end{equation*}
  Regard
  $[\eta|_{\de(z'_0,s'_0)}]$ as the point in the infinitesimal  space $Z(\de(z'_0,s'_0))$. Applying Lemma \ref{Th:cor} for $\eta$ on $\de(z'_0,s'_0)$, we can find a Beltrami differential $\eta'\in [\eta|_{\de(z'_0,s'_0)}]_Z$ such that
  \[\lnorm{\eta'}\leq\lnorm{\eta|_{\de(z'_0,s'_0)}}+\frac{\delta}{2},  \]
and $\eta'(z)=0$ on some small disk $\de(z'_0,r'_0)$ ($r'_0<s'_0$).

Set
    \begin{equation*}\label{Eq:land1}
\chi'_0(z)=\begin{cases}\eta(z),\quad &z\in \de\backslash  \ov {\de(z'_0,s'_0)},\\
\eta'(z), \quad & z\in \de(z'_0,s'_0).
\end{cases}
\end{equation*}
  Then   $\chi'_0\in \msu$ and  $\chi'_0(z)=0$ on   $\de(z'_0,r'_0)$.

  Let $\Lambda_0$ denote the collection of $\alpha\in \bmu$ with the following conditions:\\
(a) $ |\alpha(z)|\leq |\chi'_0(z)|$  a.e.  on   $\de$,\\
(b) there exists some small disk $\de(\zeta,r)\subset \de$  such that $\alpha(z)=0$ on $\de(\zeta,r)$.

  It is obvious that $\Lambda_0\subset \msu$ and $\chi'_0\in\Lambda_0$. If $\alpha\in \Lambda_0$, let
\[\rho_0(\alpha)=\sup\{r:\;\alpha(z)=0 \text{ on some }\de(\zeta,r)\subset \de\}.\]
Put
\[\rho_0=\sup\{\rho_0(\alpha):\; \alpha\in \Lambda_0\}.\]

   We proceed with the construction of  a sequence of Beltrami differentials $\{\chi_n\}$ in $\msu$.

  $n=0)$ Choose $\chi_0\in \Lambda_0$ such that $\rho_0(\chi_0)\geq r_0:=\frac{\rho_0}{2}$ and $\chi_0(z)=0$ on some $\de(z_0,r_0)\subset \de$.  If $\chi_0$ is a weakly \nde dilatation, then   $\nu=\chi_0$ is the desired  weakly \nde dilatation. Otherwise, $\chi_0$ is strongly decreasable. By definition, there exists a Beltrami differential $\eta_0\in\msu$  such that \\
(1) $|\eta_0(z)|\leq |\chi_0(z)|$ for almost all $z\in \de$,\\
(2) there exists a small round disk $\de(z'_1,s'_1)\subset \de$  and a positive number $\delta_0>0$ such that
\begin{equation*}
|\eta_0(z)|\leq |\chi_0(z)|-\delta_0, \text{ for almost all  } z\in \de(z'_1,s'_1).
\end{equation*}
Since $\chi_0(z)=0$ on $\de(z_0,r_0)$, it forces that $\de(z'_1,s'_1)\subset \de\backslash \de(z_0,r_0)$.
  Applying Lemma \ref{Th:cor} for $\eta_0$ on $\de(z'_1,s'_1)$, we can find a Beltrami differential $\chi'_1\in \msu$ such that $\chi'_1(z)=0$ on  some
  small disk $\de(z'_1,r'_1)$ ($r'_1<s'_1$).

  Let $\Lambda_1$ denote the collection of $\alpha\in \bmu$ with the following conditions:\\
(a) $ |\alpha(z)|\leq |\chi_0(z)|$  a.e.  on   $\de$,\\
(b) there exists some small disk $\de(\zeta,r)\subset \de\backslash \de(z_0,r_0)$  such that $\alpha(z)=0$ on $\de(\zeta,r)$.

It is obvious that $\Lambda_1\subset \msu$ and $\chi'_1\in\Lambda_1$. If $\alpha\in \Lambda_1$, let
\[\rho_1(\alpha)=\sup\{r:\;\alpha(z)=0 \text{ on some }\de(\zeta,r)\subset \de\backslash \de(z_0,r_0)\}.\]
Put
\[\rho_1=\sup\{\rho_1(\alpha):\; \alpha\in \Lambda_1\}.\]

 $n=1)$ Choose $\chi_1\in \Lambda_1$ such that $\rho_0(\chi_1)\geq r_1:=\frac{\rho_1}{2}$ and
  \[\chi_1(z)=0 \text{ on some } \de(z_1,r_1)\subset \de\backslash \de(z_0,r_0).\]
  If $\chi_1$ is a weakly \nde dilatation, then let $\nu=\chi_1$.  Otherwise, $\chi_1$ is strongly decreasable. By definition, there exists a Beltrami differential $\eta_1\in\msu$  such that \\
(1) $|\eta_1(z)|\leq |\chi_1(z)|$ for almost all $z\in \de$,\\
(2) there exists a small round disk $\de(z'_2,s'_2)\subset \de$  and a positive number $\delta_2>0$ such that
\begin{equation*}
|\eta_1(z)|\leq |\chi_1(z)|-\delta_1, \text{ for almost all  } z\in \de(z'_2,s'_2).
\end{equation*}
Since $\chi_1(z)=0$ on $\de(z_0,r_0)\cup \de(z_1,r_1)$, it forces that
\[\de(z'_2,s'_2)\subset \de\backslash (\de(z_0,r_0)\cup \de(z_1,r_1)).\]
  Applying Lemma \ref{Th:cor} for $\eta_1$ on $\de(z'_2,s'_2)$, we can find a Beltrami differential $\chi'_2\in \msu$ such that $\chi'_2(z)=0$ on  small disk $\de(z'_2,r'_2)$ ($r'_2<s'_2$).

  Let $\Lambda_2$ denote the collection of $\alpha\in \bmu$ with the following conditions:\\
(a)  $ |\alpha(z)|\leq |\chi_1(z)|$  a.e.  on   $\de$,\\
(b) there exists some small disk $\de(\zeta,r)\subset \de\backslash (\de(z_0,r_0)\cup \de(z_1,r_1))$  such that $\alpha(z)=0$ on $\de(\zeta,r)$.

It is obvious that $\Lambda_2\subset \msu$ and  $\chi'_2\in\Lambda_2$.
If $\alpha\in \Lambda_2$, let
\[\rho_2(\alpha)=\sup\{r:\;\alpha(z)=0 \text{ on some  }\de(\zeta,r)\subset \de\backslash (\de(z_0,r_0)\cup \de(z_1,r_1))\}.\]
Put
\[\rho_2=\sup\{\rho_2(\alpha):\; \alpha\in \Lambda_2\}.\]

  $n\to n+1)$ If we can choose a  weakly \nde dilatation $\chi_n\in \Lambda_n$, then let $\nu=\chi_n$.  Otherwise, proceeding as above, we find four sequences,  $\{\chi_n\in \msu\}$, $\{\Lambda_n\subset \msu\} $, $\{\rho_n\in (0,1)\},$ $\{\de(z_n,r_n)\subset\de\}$ ($r_n=\frac{\rho_n}{2}$) as follows.

   $\chi_n\in \Lambda_{n}$ satisfies $\rho_{n}(\chi_n)\geq r_n:=\frac{\rho_{n}}{2}$ and
  \[\chi_n(z)=0 \text{ on some } \de(z_n,r_n)\subset \de\backslash (\bigcup_{k=0}^{n-1}\de(z_k,r_k)).\]

 $\Lambda_{n+1}$ is  the collection of $\alpha\in \bmu$ with the following conditions:\\
(a) $ |\alpha(z)|\leq |\chi_n(z)|$  a.e.  on   $\de$,\\
(b) there exists some small disk $\de(\zeta,r)\subset \de\backslash (\bigcup_{k=0}^n\de(z_k,r_k))$  such that $\alpha(z)=0$ on $\de(\zeta,r)$.

Since $\chi_n$ is not a weakly \nde dilatation in $\msu$, $\Lambda_{n+1}$ is not void by the foregoing reason.
If $\alpha\in \Lambda_{n+1}$, let
\[\rho_{n+1}(\alpha)=\sup\{r:\;\alpha(z)=0 \text{ on }\de(\zeta,r)\subset \de\backslash (\bigcup_{k=0}^{n}\de(z_k,r_k))\}.\]
Put
\[\rho_{n+1}=\sup\{\rho_{n+1}(\alpha):\; \alpha\in \Lambda_{n+1}\}.\]

It is clear that
\begin{equation}\label{Eq:rhor}\lim_{n\to \infty}\rho_n=\lim_{n\to \infty}r_n=0.\end{equation}

  Let $B_n=\bigcup^n_{k=0}\de(z_k,r_k)$, $n=0,1,\cdots$. Then
  \[ B=\lim_{n\to\infty} B_n=\bigcup^\infty_{n=0}\de(z_n,r_n)\subset \de.\]

  By the weak-* compactness, there exists a subsequence of
$\{\chi_n\}$, still denoted by $\{\chi_n \}$, which converges to a
limit $\nu\in Bel(\de)$ in the weak-* topology, that is, for
any $\phi\in L^1(\de)$,
\begin{equation}\label{Eq:weak}
\lim_{n\to\infty}\iint_\de\chi_n(z)\phi(z)dxdy=\iint_\de\nu(z)\phi(z)dxdy.
\end{equation}
Now when $\phi\in Q(\de)$, since $\chi_n\in \bmu$, we have
\begin{equation*}
\iint_\de\chi_n(z)\phi(z)dxdy=\iint_\de\mu(z)\phi(z)dxdy.
\end{equation*}
We have
\[\iint_\de\nu(z)\phi(z)dxdy=\iint_\de\mu(z)\phi(z)dxdy\]
for all $\phi\in Q(\de)$ and hence $\nu\in\bmu$. On the other hand,
since $\chi_n$ converges to $\nu$ in the weak-* topology, it follows by the standard functional analysis theory
that\begin{equation*}
\|\nu\|_\infty\leq\liminf_{n\to\infty}\|\chi_n\|_\infty=k.
\end{equation*}

 \textit{Claim 1.}   $\nu\in \msu$ and $\nu(z)=0$ on $B$ by (\ref{Eq:str}).

 By the inductive construction, we see that $|\chi_{n+1}(z)|\leq |\chi_{n}(z)|$  a.e. on $\de$. Set
 \begin{equation*}\beta(z)= \lim_{n\to \infty}|\chi_{n}(z)|.\end{equation*}

 We now show that
 \begin{equation}\label{Eq:str}
|\nu(z)|\leq \beta(z),\; a.e.\;z\in \de.
 \end{equation}

 For a given $z\in  \de$ and sufficiently small $r>0$, we restrict our consideration on $\Delta(z, r)$.
Regard $\{\chi_n\}$ as a sequence in $L^1(\Delta(z, r))$. Since the dual space of $L^1(\Delta(z, r))$ is $L^\infty(\Delta(z, r))$, the weak-* convergence of $\{\chi_n\}$ in $L^\infty(\Delta(z, r))$ actually implies that $\chi_n$ converges to $\nu$ in $L^1(\Delta(z, r))$ in the weak topology. Thus, we have
 \begin{equation}\label{Eq:623} \begin{split}
\iint_{\Delta(z, r)}|\nu(\zeta)|d\xi d\eta\leq \varliminf_{n\to \infty}\iint_{\Delta(z, r)}|\chi_n(\zeta)|d\xi d\eta.
 \end{split}
  \end{equation}
So, we see that
 \begin{equation}\label{Eq:624} \begin{split}
&\iint_{\Delta(z, r)}|\nu(\zeta)|d\xi d\eta\leq \varlimsup_{n\to \infty}\iint_{\Delta(z, r)}|\chi_n(\zeta)|d\xi d\eta\\
&\leq \iint_{\Delta(z, r)}\varlimsup_{n\to \infty}|\chi_n(\zeta)|d\xi d\eta=\iint_{\Delta(z, r)}\beta(\zeta)d\xi d\eta.
\end{split}
  \end{equation}

Because for almost every $z\in  \de$, $z$ is a Lebesgue point for both $|\nu|$ and $\beta$. Therefore, it holds  for almost all $z\in  \de$ that

  \begin{equation}\label{Eq:625}
|\nu(z)|=\lim_{r\to0}\frac{\iint_{\Delta(z, r)}|\nu(\zeta)|d\xi d\eta}{\pi r^2}.
 \end{equation}

Furthermore, by (\ref{Eq:624})  we get that for almost all $z\in  \de$,
 \begin{equation}\label{Eq:625} \begin{split}
|\nu(z)|=\lim_{r\to0}\frac{\iint_{\Delta(z, r)}|\nu(\zeta)|d\xi d\eta}{\pi r^2}\leq
\lim_{r\to0}\frac{\iint_{\Delta(z, r)}\beta(\zeta)d\xi d\eta}{\pi r^2}=\beta(z).
\end{split}
  \end{equation}

Since  each  $\chi_n$ belongs to $\msu$ and $\chi_n(z)=0$ on $B_n$, we see that $\nu\in \msu$  and $\nu(z)=0$ on $B$ by (\ref{Eq:str}).

  \textit{Claim 2.} $\nu$ is a weakly \nde dilatation.

  Suppose to the contrary. Then there exists a Beltrami differential $\eta\in\msu$  such that \\
(1) $|\eta(z)|\leq |\nu(z)|$ for almost all $z\in \de$,\\
(2) there exists a small round disk $\de(\zeta,r')\subset \de$  and a positive number $\delta'>0$ such that
\begin{equation*}
|\eta(z)|\leq |\nu(z)|-\delta', \text{ for almost all  } z\in \de(\zeta,r').
\end{equation*}
Since $\nu(z)=0$ on $B$, it forces that $\de(\zeta,r')\subset \de\backslash B$.  Applying Lemma \ref{Th:cor} for $\eta$ on $\de(\zeta,r')$, we can find a Beltrami differential $\nu'\in \msu$ and  some small disk $\de(\zeta,r)\subset \de\backslash B$ ($r<r'$)   such that $\nu'(z)=0$ on $\de(\zeta,r')\cup B$.
It is obvious that $\nu'$ belongs to  $\bigcap_{n=0}^\infty\Lambda_n$.
However, by  (\ref{Eq:rhor}) it contradicts the choice of $\chi_n$. The claim is proved, and so is the theorem.

  \end{proof}

\section{\!\!\!\!\!{. }
Concluding remarks}\label{S:remarks}

In the end of the paper \cite{ZZC}, Zhou et al.  posed the following problem.

\begin{problem} Suppose that $\mu$ is a weakly \nde dilatation in $\emu$. Is
$\mu$ is necessarily a \nde  dilatation?\end{problem}
The author gave a negative answer to the problem in \cite{Yao10} and obtained further results in \cite{Yao11}. It is natural to pose 
the following problem.
\begin{problem}  Suppose that $\mu$ is a weakly \nde dilatation in $\bmu$. Is $\mu$  is a \nde one?
  \end{problem}
  We will show that the answer is also negative for this case in another paper \cite{Yao12}.\\

\noindent\textbf{Acknowledgements.}  The author would like to thank the referee for   his valuable comments and careful corrections   which improved the exposition.

\renewcommand\refname{\centerline{\Large{R}\normalsize{EFERENCES}}}
\medskip

\end{document}